\documentclass[11pt,reqno]{amsart}

\usepackage{amsmath,mathtools}
\usepackage{amsthm}
\usepackage{amssymb}
\usepackage{verbatim}
\usepackage[dvipsnames]{xcolor}
\usepackage{hyperref}
\usepackage{mathtools}
\usepackage{enumitem} 
\usepackage{stmaryrd}
\usepackage{soul}
\usepackage[numbers,sort&compress]{natbib}
\usepackage{fullpage}
\usepackage{caption}
\def\thin{\hspace{.5pt}}
\usepackage[backgroundcolor=none]{todonotes}

\theoremstyle{definition}

\newtheorem{theorem}{Theorem}
\newtheorem*{theorem*}{Theorem}
\newtheorem*{conj}{Conjecture}

\newtheorem*{remark}{Remark}

\newtheorem{lemma}[theorem]{Lemma}

\newtheorem{corollary}[theorem]{Corollary}
\newtheorem*{thm}{Theorem}

\newtheorem*{claim}{Claim}

\newcommand{\R}{\mathbb{R}}
\newcommand{\Q}{\mathbb{Q}}

\newcommand{\Z}{\mathbb{Z}}
\newcommand{\M}{\mathcal{M}}
\newcommand{\MM}{\mathrm{M}}
\newcommand{\EE}{\mathrm{E}}
\newcommand{\Li}{\operatorname{Li}}

\tikzset{circ/.style = {fill, circle, inner sep = 0, minimum size = 3}}

\newcommand{\re}{\textnormal{Re}}
\newcommand{\im}{\textnormal{Im}}

\renewcommand{\=}{\;=\;}

\renewcommand{\Re}{\re}
\renewcommand{\Im}{\im}

\usepackage{color}

\usepackage{commath}
\usepackage{float}
\usepackage{enumitem}

\DeclareMathOperator{\dist}{dist}
\DeclareMathOperator{\sgn}{sgn}

\tikzset{circ/.style = {fill, circle, inner sep = 0, minimum size = 3}}
\usetikzlibrary{arrows.meta}
\usetikzlibrary{decorations.markings}
\usetikzlibrary{decorations.pathmorphing}
\usetikzlibrary{positioning}
\usetikzlibrary{intersections}
\usepackage{tkz-euclide}
\pgfarrowsdeclarecombine{twolatex'}{twolatex'}{latex'}{latex'}{latex'}{latex'}
\pgfarrowsdeclarecombine{twolatex'}{twolatex'}{latex'}{latex'}{latex'}{latex'}
\tikzset{->-/.style = {decoration={markings,
 mark=at position #1 with {\arrow[scale=2]{latex'}}},
 postaction={decorate}}}
\tikzset{-<-/.style = {decoration={markings,
 mark=at position #1 with {\arrowreversed[scale=2]{latex'}}},
 postaction={decorate}}}

\newcommand{\bea}{\begin{equation}\begin{aligned}}
\newcommand{\eea}{\end{aligned}\end{equation}}
\allowdisplaybreaks

\title{Exceptional Sign Pairs in Oscillatory Asymptotics and Conjectures of Andrews}
\author[Kalita]{Jayashree Kalita}
\address{Department of Mathematics,
	Vanderbilt University, Nashville, Tennessee, United States}
\email{jayashree.kalita@vanderbilt.edu}

\begin{document}

\begin{abstract}
In this paper, we prove an \emph{exceptional sign pair phenomenon} for a general family of integer sequences with exponentially growing oscillatory asymptotics.
This resolves, in particular, conjectural sign patterns proposed by Andrews in a 1986 paper for three $q$-series from Ramanujan's Lost Notebook, namely $v_2(q), v_3(q),$ and $v_4(q)$.
Recent work of Kundu, Storzer, Wang, and the author established that the coefficients of these $q$-series are alternating in sign except in a density-zero set. 
We prove here that the exceptional indices where the alternating sign pattern fails occur infinitely often in a structured manner. 
More precisely, we show that there exist infinitely many pairs of consecutive coefficients having the same sign, and that the magnitude of at least one coefficient in each pair is a local minimum of the sequence of absolute values of the coefficients. 
Together with earlier results, this completes the resolution of Andrews' conjectures and their analogs for all the $q$-series in his original study.
\end{abstract}

\maketitle
\section{Introduction}

    In~\cite{andrews1986questions}, Andrews studied five $q$-hypergeometric series from Ramanujan's Lost Notebook~\cite{RlostIV,RlostV} and observed striking growth and sign behavior in their coefficients. 
    These five series are
\begin{align*}
    \sigma(q)
    &
    \;\coloneqq\; \sum_{n\geq0}\frac{q^{n(n+1)/2}}{(-q;q)_n} 
    = 1+q-q^2+2q^3-2q^4+q^5+\cdots
    \,=\,\sum_{n\geq 0} S(n)\, q^n,\\
    v_1(q)
    &
    \;\coloneqq\;
    \sum_{n\geq0}\frac{q^{{n(n+1)}/2}}{(-q^2;q^2)_n} 
    \= 1+q+q^6-q^7-q^8+q^9+\cdots
    \;=\;\sum_{n\geq 0} V_1(n)\thin q^n,\\
    v_2(q) 
    &
    \;\coloneqq\;
    \sum_{n\geq1}\frac{q^{2n^2-n}}{(-q;q^2)_n} 
    \= q-q^2+q^3-q^4+q^5-q^9+\cdots
    \;=\;\sum_{n\geq 0} V_2(n)\thin q^n,\\
    v_3(q)
    &
    \;\coloneqq\;
    \sum_{n\geq0}\frac{(-1)^nq^{n(n+1)/2}}{(-q;q)^2_n}
    \= 1-q+2q^2-2q^3+2q^4+\cdots
    \;=\;\sum_{n\geq 0} V_3(n)\thin q^n,\\
    v_4(q)
    &
    \;\coloneqq\;
    \sum_{n\geq0}\frac{(-1)^nq^{2n^2}}{(-q;q^2)_{2n}}
    \= 1-q^2+q^3-q^4+2q^5-2q^6+\cdots
    \;=\;
    \sum_{n\geq 0} V_4(n)\thin q^n.
\end{align*}

    Based on numerical evidence, Andrews made six conjectures on the coefficients of these $q$-series. 
    His first two conjectures addressed the series $\sigma(q)$ and stated that
    \begin{equation*}
    \limsup_{n\to\infty}\abs{S(n)} = \infty \text{ and } S(n)=0 \text{ for infinitely many }n.
    \end{equation*}
    Andrews--Dyson--Hickerson~\cite{andrews1988partitions} famously proved these two conjectures, uncovering a remarkable connection between the coefficients of $\sigma(q)$ and the arithmetic of $\Q(\sqrt{6})$. 
    This example has since influenced a large body of work, including Cohen's~\cite{cohen1988q} construction of a related Maaß wave form, Zwegers'~\cite{zwegersmock} theory of mock Maaß theta functions, Zagier's~\cite{zagier2010quantum} theory of quantum modular forms, and more recently, the work of Li--Roehrig \cite{li2025mock}.

    For the $q$-series $v_1(q),v_2(q),v_3(q),\text{ and }v_4(q)$, Andrews observed what he described as ``great sign regularity." 
    More precisely, Andrews made the following conjectures for $v_1(q)$. 
    He stated that Conjecture 3~\cite{andrews1986questions} and appropriate analogs of Conjectures 4--6~\cite{andrews1986questions} hold for $v_2(q),v_3(q),\text{ and }v_4(q)$ as well.
    \begin{conj}(Conjecture 3~\cite{andrews1986questions})
    We have that $\abs{V_1(n)}\to\infty$ as $n\to\infty$.
    \end{conj}
    \begin{conj}(Conjecture 4~\cite{andrews1986questions})
    For almost all $n$, $V_1(n),V_1(n+1),V_1(n+2),\text{ and }V_1(n+3)$ are two positive and two negative numbers.    
    \end{conj}
    \begin{conj}(Conjecture 5~\cite{andrews1986questions})
    \sloppy For $k\ge5$, there is an infinite sequence $N_5=293,N_6=410,$ $N_7=545,N_8=702,\ldots,N_k\ge10k^2,\ldots$ such that $V_1(N_k),V_1(N_k+1),V_1(N_k+2)$ all have the same sign.
    \end{conj}
    \begin{conj}(Conjecture 6~\cite{andrews1986questions}) \sloppy With reference to Conjecture 5~\cite{andrews1986questions}, the numbers $\abs{V_1(N_k)},$ $ \abs{V_1(N_k+1)},\abs{V_1(N_k+2)}$ contain a local minimum of the sequence $(\,\abs{V_1(n)})_{n\ge0}$.   
    \end{conj}
    
    Conjectures~3 (specifically, a refined density-one version) and~4 were proved for $v_1(q)$ by Folsom--Males--Rolen--Storzer~\cite{folsom2023oscillating}, and their analogs for $v_2(q),v_3(q),\text{ and }v_4(q)$ were proved by Kundu, Storzer, Wang, and the author~\cite{kalita2026}. 
    Both works relied on establishing precise asymptotic formulas for the underlying coefficients. More precisely, the later work established the following theorem.
    
    \begin{thm}(Theorem A~\cite{kalita2026})
    The sequences $V_2(n),V_3(n)$, and $V_{4}(n)$ satisfy an \emph{almost alternating sign pattern}, meaning that the following assertions are true for $j=2, 3, 4$.
    \begin{enumerate}[label=\textup{(}\roman*\textup{)}]
    \item The sequence $\abs{V_j(n)} \to \infty$ as $n\to \infty$ away from a set of density $0$.
    \item For almost all $n$, the coefficients $V_j(n)$ and $V_j(n+1)$ have opposite signs.
    \end{enumerate}  
    \end{thm}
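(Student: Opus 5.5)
The plan is to derive both assertions from a precise asymptotic formula for $V_j(n)$. We first write each $v_j(q)$, $j=2,3,4$, as a combination of infinite products or partial theta/false theta type pieces via known $q$-series identities, so that near the dominant root of unity (here $q\to -1$, or $q\to\pm i$ for the series whose signs alternate with period two) the generating function has modular-type behaviour. Then the circle method, in the form of Wright's variant, applies. The expected output is
\begin{equation*}
V_j(n) \= C_j\, n^{-\alpha_j} e^{\beta_j\sqrt{n}}\Bigl(\cos\bigl(\gamma_j\sqrt{n}+\delta_j\bigr)\,(-1)^n + O\bigl(n^{-1/2}\bigr)\Bigr)\;+\;(\text{exponentially smaller terms}),
\end{equation*}
with explicit constants $C_j>0$, $\beta_j>0$ and $\gamma_j\neq 0$. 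The oscillation arises because the two complex-conjugate dominant saddle contributions combine into a cosine.

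The key steps, in order, are as follows.
\begin{enumerate}
\item Identify the major arcs. These are the roots of unity $\zeta$ at which $|v_j(re^{i\theta})|$ is maximal as $r\to1^-$. Compute the asymptotic expansion of $v_j(\zeta e^{-z})$ as $z\to0$ in a cone, which has leading term $A z^{\kappa} e^{c/z}$ with $c$ complex.
\item Bound $v_j$ on the minor arcs by something like $\exp\bigl((\Re c - \varepsilon)/|z|\bigr)$.
\item Evaluate the major-arc integrals with the standard Bessel/saddle-point computation. This gives the main term with exponential growth $e^{\beta\sqrt n}$, where $\beta = 2\sqrt{\Re c}$-type, and oscillation from $\Im c$, plus a relative error $O(n^{-1/2})$.
\end{enumerate}

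Given the formula, part (i) follows by noting that $|V_j(n)|$ is large unless $|\cos(\gamma_j\sqrt n+\delta_j)| \ll n^{-1/2}$. The set of $n$ with $\gamma_j\sqrt n+\delta_j$ within $\epsilon_n = n^{-1/2+\eta}$ of $\pi/2+\pi\mathbb{Z}$ has counting function $O\bigl(\sum_{m\le \sqrt N} m\cdot m^{-1+2\eta}\bigr)=o(N)$. Indeed, near $\sqrt n\approx m$ there are about $m$ integers per unit of $\sqrt n$, and a proportion $\epsilon_n$ of them fall in the bad window. Off this set, $|V_j(n)| \gg n^{-\alpha_j-1/2+\eta}e^{\beta_j\sqrt n}\to\infty$. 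For part (ii), off the same density-zero set, the sign of $V_j(n)$ is $(-1)^n\sgn\cos(\gamma_j\sqrt n+\delta_j)$. Since $\sqrt{n+1}-\sqrt n = O(n^{-1/2})$, the cosine keeps its sign from $n$ to $n+1$ unless its argument crosses a zero between them, and such crossings occur only for $n$ within the bad window. Hence the factor $(-1)^n$ forces opposite signs for almost all $n$.

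The main obstacle is step 1, together with the minor-arc bound. The series are not themselves modular, so one needs an identity expressing $v_j$ near the dominant cusp in terms of objects with known transformation behaviour, for instance via Euler-type or Rogers--Fine transformations to theta quotients plus lower-order pieces. One must then check that the competing roots of unity contribute strictly smaller exponential growth. I would first try to rewrite $v_j(\zeta q)$ as a product $\prod(1\pm q^k)^{\pm1}$ times a convergent remainder series, whose modular asymptotics follow from the Dedekind eta transformation, and then bound the remainder uniformly on the arcs.
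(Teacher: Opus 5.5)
Your deduction of (i) and (ii) from an asymptotic formula is correct and follows the same logical route as the paper. The paper does not reprove this statement; it quotes it from \cite{kalita2026}, where it is derived from the asymptotic formula of Theorem~B. Theorem~B implies your weaker form, with additive error $O(n^{-1/2})$ relative to the envelope. The window $\epsilon_n=n^{-1/2+\eta}$, $0<\eta<\tfrac12$, dominates that error, and the bad set $B$ has $O(N^{1/2+\eta})$ elements up to $N$. Since $\lambda(\sqrt{n+1}-\sqrt n)\asymp n^{-1/2}=o(\epsilon_n)$, a zero of the cosine between $\sqrt n$ and $\sqrt{n+1}$ forces $n\in B$. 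You should also discard $B-1$ so that the sign of $V_j(n+1)$ is controlled, which is harmless. So, taking Theorem~B as input, your argument is complete.

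The genuine gap is in your plan for proving the asymptotic formula itself, namely Step~1. By the factor $(-1)^n$, the dominant point is $q=-1$ for all three series, not $q\to\pm i$. Near it you propose to rewrite $v_j$ as eta/theta-type products times a harmless remainder and to read off the singularity from the Dedekind eta transformation. This cannot produce the right exponent.

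For eta or theta quotients (and likewise mock theta functions), the transformation law at $q=\zeta e^{-z}$ gives a singular factor $e^{c/z}$ with $c$ a \emph{real} rational multiple of $\pi^2$. Partial and false theta pieces have only power-series asymptotics. Here, however, $c=W_j$ with $\Im W_j=D(e^{\pi i/3})/2\neq 0$, a Bloch--Wigner dilogarithm value. It is exactly this non-real exponent at a single cusp that produces $\cos(\lambda\sqrt n+\pi/4)$; a real $c$ only yields $\zeta^{-n}e^{2\sqrt{cn}}$-type terms and hence periodic sign patterns. For $j=3,4$ one even has $\Re W_j=0$. The growth rate $2\Re\sqrt{W_j}=\sqrt{D(e^{\pi i/3})}$ therefore comes entirely from the dilogarithm, and your heuristic $\beta\approx 2\sqrt{\Re c}$ would predict no growth at all. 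Consequently, any such identity would leave the whole dominant behaviour inside your ``convergent remainder'', and the proposal gives no way to analyze it.

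The missing idea is that $v_2,v_3,v_4$ are Nahm-type sums. Near the cusp one approximates the summands, via Euler--Maclaurin, by $\exp\bigl(\tfrac1z f(e^{-kz})+\cdots\bigr)$ with $f$ built from $\Li_2$ and $\log^2$. One then performs a saddle-point analysis in the summation variable. The relevant critical points are complex and come in a conjugate pair; their critical values give $W_j$ and $\overline{W_j}$ (whence $D(e^{\pi i/3})$), and the two conjugate contributions combine into the cosine. The minor-arc bound in your Step~2 likewise cannot come from modularity and must be proved by estimating the sum directly.
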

    The theorem was proved using the following asymptotic formula for the coefficients, which will also play a key role in the present paper.

    Recall the Bloch--Wigner dilogarithm $D(e^{i\theta}) \coloneqq \Im(\Li_2(e^{i\theta}))$, and more generally, $D(x) := \Im(\Li_2(x))+\arg (1-x) \log\abs{x}$ (see, for example, \cite{zagier2007dilogarithm}).
    \begin{thm}(Theorem B~\cite{kalita2026})
    For $j\in \{2,3,4\}$, as $n\to\infty$, the following asymptotics hold
    \begin{align*}
    V_j(n)
    &\= (-1)^{n}\alpha_j\frac{e^{2\sqrt{n}\Re\left(\sqrt{W_j}\right)}}{\sqrt{n}}
    \cos\left(\!2\sqrt{n}\Im\bigl(\sqrt{W_j}\bigr)+\frac{\pi}{4}\!\right)
    \left(1+O\!\left(n^{-\frac{1}{2}}\right)\right)
    + O\!\left(\frac{e^{\sqrt{n}\Re\left(\sqrt{W_j}\right)}}{\sqrt{n}}\right)\!,
    \end{align*}
    where
    \vspace{-0.5cm}
    \begin{align*}
    &\alpha_2 \= \frac 1{\sqrt{2}\sqrt[4]{3}},\qquad\qquad\qquad\quad
    &&W_2 \= -\frac{\pi^2}{24}+D(e^{\pi i/3})\frac i2,\\
    &\alpha_3 \= -\frac{1}{\sqrt[4]{3}},\qquad\qquad
    &&W_3 \= D(e^{\pi i/3})\frac i2,\\
    &\alpha_4 \= \frac1{2\sqrt[4]{3}},\qquad\qquad
    &&W_4 \= D(e^{\pi i/3})\frac i2,
    \end{align*}
    and $D(e^{\pi i/3})=1.014942\cdots$.
    \end{thm}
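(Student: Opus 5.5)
The plan is the standard circle-method/saddle-point route used by Folsom--Males--Rolen--Storzer for $v_1(q)$, carried out for each $j\in\{2,3,4\}$. Write
\[
V_j(n)=\frac{1}{2\pi i}\oint_{|q|=e^{-\varepsilon}} \frac{v_j(q)}{q^{n+1}}\,dq ,
\]
and choose the radius $e^{-\varepsilon}$ with $\varepsilon\asymp n^{-1/2}$, up to a bounded complex factor determined by the saddle points below. The factor $(-1)^n$ in the claimed formula signals that the dominant contribution comes from $q$ near $-1$. So the first task is an asymptotic expansion of $v_j(-e^{-t})$ as $t\to0$ in a sector $|\arg t|<\pi/2-\delta$, of the shape
\[
v_j(-e^{-t})=\beta_j\, t^{\kappa_j}\,e^{W_j/t}\,\bigl(1+O(t)\bigr).
\]
Here $W_j$ should be given by the dilogarithm values in the statement.

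To obtain this expansion, substitute $q=-e^{-t}$ into the $m$-th summand, writing the summation index as $m$ to avoid a clash with $n$. The alternating signs combine with $(-q;q)_m$ (or $(-q;q^2)_m$) to turn the summand into a quotient of finite $q$-Pochhammer products at roots of unity. Its logarithm, by Euler--Maclaurin summation of $\sum_k \log(1\pm e^{-kt})$, equals $\frac1t\Phi_j(mt)+O(1)$, where $\Phi_j(x)$ is an explicit combination of a quadratic in $x$ and $\Li_2(\pm e^{-x})$-type terms.

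The sum over $m$ is not dominated by a real maximum: the summand oscillates. I would therefore replace the sum by an integral (Poisson/Abel--Plana, or a Mellin--Barnes representation) and deform the contour into the complex $x$-plane. The saddle condition $\Phi_j'(x)=0$ should reduce to $1-z+z^2=0$ with $z=e^{-x}$, whose roots are $e^{\pm\pi i/3}$. The critical value then evaluates, via the dilogarithm identities for $e^{\pi i/3}$ (for instance $\Im\Li_2(e^{\pi i/3})=D(e^{\pi i/3})$ and $\Re\Li_2(e^{\pi i/3})=\pi^2/36$), to $W_j$. For $j=2$ an extra $-\pi^2/24$ comes from the $q^{2m^2-m}$ exponent and the $q^2$-base. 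A Gaussian integral around the saddle produces the power $t^{\kappa_j}$ and the constant $\beta_j$.

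With the expansion of $v_j(-e^{-t})$ in hand, the second step is a saddle-point evaluation in $t$ of
\[
\int e^{W_j/t+nt}\,t^{\kappa_j}\,dt .
\]
The saddles are at $t=\pm\sqrt{W_j/n}$ on the appropriate branch. Because $W_j\notin\R$, the integrand has to be split into two conjugate contributions, one at $\sqrt{W_j}$ and one at $\overline{\sqrt{W_j}}$. These give
\[
e^{2\sqrt n\sqrt{W_j}}+e^{2\sqrt n\,\overline{\sqrt{W_j}}},
\]
which is exactly the factor $e^{2\sqrt n\Re\sqrt{W_j}}\cos\bigl(2\sqrt n\Im\sqrt{W_j}+\pi/4\bigr)$. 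The phase $\pi/4$ and the constants $\alpha_j$ come from $\beta_j$ and the Gaussian factor $\sqrt{\pi}\,W_j^{1/4}n^{-3/4}$; I would fix them by direct computation. The relative error $O(n^{-1/2})$ comes from the $O(t)$ term.

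The last step bounds the rest of the circle. Near other roots of unity, in particular $q$ near $1$, the same analysis gives growth of at most $e^{c/|t|}$ with $\Re$ of the exponent at most roughly $\frac14$ of the principal one. Away from all roots of unity, crude bounds on the summands suffice. Together these yield the secondary term $O\bigl(e^{\sqrt n\Re\sqrt{W_j}}/\sqrt n\bigr)$.

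I expect the main obstacle to be the first step: justifying the complex saddle in the summation variable uniformly for $t$ in a sector. This requires controlling the Euler--Maclaurin remainders for complex $x=mt$ and showing that the deformed contour passes only through the intended saddles $e^{\pm\pi i/3}$, with the other branches of $\Li_2$ contributing smaller exponentials. I would first try rewriting $1/(-q;q)_m$ through a quantum-modular or Mordell-integral representation and then applying the saddle-point method to that integral.
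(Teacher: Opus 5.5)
This paper does not prove the statement: Theorem~B is quoted from \cite{kalita2026} and used only as the input \eqref{V_n} for Theorem~\ref{thm:Gen}, so there is no in-paper argument to compare with. On its own terms, your outline points in a plausible direction: a circle method centred at $q=-1$, with a complex saddle in the summation variable producing $\Li_2(e^{\pm\pi i/3})$. It does, however, contain a structural error, and it leaves the substantive steps undone.

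\textbf{The first-stage expansion has the wrong shape.} You write $v_j(-e^{-t})=\beta_j t^{\kappa_j}e^{W_j/t}(1+O(t))$. Since $v_j$ has integer coefficients, $v_j(-e^{-t})$ is real for $t>0$, and this expression with $W_j\notin\R$ is not. The two conjugate summation saddles must give $t^{\kappa_j}\bigl(\beta_j e^{W_j/t}+\overline{\beta_j}\,e^{\overline{W_j}/t}\bigr)$ plus an additive error, with $\kappa_j=-\tfrac12$ forced by the factor $n^{-1/2}$. Your second step cannot repair this. The function $W_j/t+nt$ has critical points only at $\pm\sqrt{W_j/n}$. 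The one with the minus sign lies in $\Re t<0$ and gives $e^{-2\sqrt{nW_j}}$, not a conjugate term. The point $\overline{\sqrt{W_j/n}}$ is a critical point of $\overline{W_j}/t+nt$ only.

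For $j=3$ the correct structure can be seen directly. Write $(-1)^{m(m+3)/2}=\Re\bigl((1-i)\,i^m\bigr)$. With $x=mt$, the $i^m$-part has leading exponent $\frac1t\bigl(-\frac{x^2}{2}+\frac12(\Li_2(1)-\Li_2(e^{-2x}))+\frac{\pi i}{2}x\bigr)$. Its critical point is $x_0=\pi i/6$ and its critical value is exactly $W_3$; the $(-i)^m$-part gives $\overline{W_3}$.

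This computation also shows where the real difficulty lies. The saddle $m=x_0/t$ lies far off the real axis. Moreover, since $\Re W_j\le 0$, there is no exponential growth for real $t$, where the two terms have equal size. The mass of the $t$-integral sits at $\arg t\approx\pm\arg\sqrt{W_j}$, which is $\pi/4$ for $j=3,4$ and about $64^\circ$ for $j=2$. You therefore need a two-term expansion that is uniform in a cone of half-angle larger than $\arg\sqrt{W_j}$, obtained by a justified contour shift of a discrete sum. That is exactly the step you call ``the main obstacle'' and do not carry out. The constants $\alpha_j$ and the phase $\pi/4$, which are part of the statement, are likewise deferred.

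\textbf{The rest of the circle would not yield the stated error term.} Take the circle $\Re t=\varepsilon:=\Re\sqrt{W_j/n}$ through the saddle. On it, the claimed secondary term $O\bigl(e^{\sqrt n\,\Re\sqrt{W_j}}/\sqrt n\bigr)$ is just $|q|^{-n}n^{-1/2}$. So any arc on which you only know $|v_j(q)|\le e^{\Re(c/t)}$ with $\sup\Re(c/t)>0$ costs an extra factor $e^{c'\sqrt n}$.

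Even granting your guess that the exponent near $q=\pm i$ is $W_j/4$ (which does match the size of the secondary term), a sup bound fails. For $j=3$ and $D=D(e^{\pi i/3})$, it gives $e^{\frac58\sqrt{nD}}$ instead of $e^{\frac12\sqrt{nD}}$. Those arcs have to be evaluated by their own saddle point, which sits at $\Re t=\varepsilon/2$.

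``Crude bounds on the summands'' discard the essential cancellation. At $q=-e^{-\varepsilon}$, the series of absolute values for $v_3$ is about $e^{\pi^2/(20\varepsilon)}$, while $|v_3|$ near $q=-1$ on that circle is at most about $e^{D/(4\varepsilon)}$. Nor is there any region ``away from all roots of unity'' in a Farey dissection. Near a root of unity of even order $k$, the absolute series for $v_3$ is still about $e^{\pi^2/(5k^2\varepsilon)}$, which is again too large for the stated error.

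Finally, comparing real parts of exponents, as in your ``$\tfrac14$ of the principal one,'' is not meaningful here, since $\Re W_j\le 0$ for all three $j$.
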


    While Conjectures 3 and 4 describe the prevailing growth and sign regularity of the coefficients, Conjectures 5 and 6 concern the exceptional behavior that occurs within this regular pattern. 
    These latter conjectures remained substantially less understood until very recently.
    Folsom--Males--Rolen--Storzer~\cite[Section 6]{folsom2023oscillating} gave a conditional explanation of Conjectures~5 and~6, contingent on deep unresolved irrationality questions.
    Bachraoui~\cite{Bach2026} subsequently provided an unconditional proof of these conjectures for $v_1(q)$ by analyzing the asymptotic formula for the coefficients $V_1(n)$ derived in~\cite{folsom2023oscillating}. 
    The aim of the present paper is to prove the analogs of Conjectures~5 and~6 for $v_2(q),v_3(q),\text{ and }v_4(q)$, and more generally to establish an exceptional sign pair phenomenon for a broad class of integer sequences with prescribed oscillatory asymptotics.
    This completes the resolution of all of Andrews' conjectures and their analogs for these five $q$-series.

\begin{table}[ht]
    \centering
    {\fontsize{6.5}{14}\selectfont
    \setlength{\tabcolsep}{3pt}

    \begin{minipage}[t]{0.32\textwidth}
    \centering
    \begin{tabular}{|c|c|c|c|c|}
    \hline
    $n$ & $V_2(n-1)$ & $V_2(n)$ & $V_2(n+1)$ & $V_2(n+2)$\\
    \hline
    48  & 2    & -1   & -1   & 1\\
    83  & 4    & -2   & -1   & 4\\
    127 & -13  & 6    & 3    & -11\\
    \hline
    \end{tabular}
    \end{minipage}
    \hfill
    \begin{minipage}[t]{0.32\textwidth}
    \centering
    \begin{tabular}{|c|c|c|c|c|}
    \hline
    $n$ & $V_3(n-1)$ & $V_3(n)$ & $V_3(n+1)$ & $V_3(n+2)$\\
    \hline
    49  & -15  & 6    & 8   & -20\\
    102 & -174 & 67   & 14   & -107\\
    175 & -2348  & 1067 & 330  & -1771\\
    \hline
    \end{tabular}
    \end{minipage}
    \hfill
    \begin{minipage}[t]{0.32\textwidth}
    \centering
    \begin{tabular}{|c|c|c|c|c|}
    \hline
    $n$ & $V_4(n-1)$ & $V_4(n)$ & $V_4(n+1)$ & $V_4(n+2)$\\
    \hline
    49 & 6 & -2 & -3 & 9\\
    102 & 87    & -42  & -8    & 62\\
    175 & 1160  & -524 & -153  & 877\\
    \hline
    \end{tabular}
    \end{minipage}
    }
    \medskip
    \caption{The first three exceptional sign pairs with their adjacent coefficients.}\label{Table}
\end{table}

     Fix $j\in\{2,3,4\}$. Numerical computations (see Table~\ref{Table}) reveal the existence of consecutive coefficients of $v_j(q)$ that share the same sign. We refer to the pair $(V_j(n),V_j(n+1))$ as an exceptional sign pair for $V_j$ if $V_j(n)$ and $V_j(n+1)$ have the same sign. The occurrence of such exceptional sign pairs necessitates the qualifier \emph{almost} in Theorem~A~\cite{kalita2026}. Our main result for these $q$-series is the following theorem which shows that such indices occur infinitely often and satisfy a local minimum property. 
    
\begin{theorem}\label{thm:main}
    The sequences $V_2(n),V_3(n)$, and $V_4(n)$ exhibit an \emph{exceptional sign pair phenomenon} in the following sense. For each fixed $j\in\{2,3,4\}$, 
    \begin{enumerate}[label=(\roman*)]
        \item There exists an infinite sequence $(N_k)_{k\ge1}$ (depending on $j$) with $N_k \ge 10k^2$ such that $V_j(N_k)\text{ and }V_j(N_k+1)$ have the same sign.
        \item At least one of $\abs{V_j(N_k)}$ and $\abs{V_j(N_k+1)}$ is a local minimum of the sequence $(\,\abs{V_j(n)})_{n\ge0}$.
    \end{enumerate}
\end{theorem}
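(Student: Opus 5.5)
Write $W=W_j$, $a=\Re\sqrt{W}>0$, $b=\Im\sqrt{W}$, and $f(x)=2b\sqrt{x}+\pi/4$. By Theorem~B~\cite{kalita2026},
\[
(-1)^n V_j(n)=\alpha_j\frac{e^{2a\sqrt n}}{\sqrt n}\Bigl(\cos f(n)+O(n^{-1/2})\Bigr).
\]
The secondary term $O(e^{a\sqrt n}/\sqrt n)$ is exponentially smaller, so it is absorbed into the $O(n^{-1/2})$ inside the parentheses. Hence the sign of $(-1)^nV_j(n)$ is the sign of $\alpha_j\cos f(n)$ whenever $|\cos f(n)|\gg n^{-1/2}$.

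An exceptional sign pair $(V_j(n),V_j(n+1))$ is the same as a sign change of $c(n):=\alpha_j\cos f(n)$ between $n$ and $n+1$. The phase increment is $f(n+1)-f(n)=b/\sqrt n+O(n^{-3/2})$, which tends to $0$. So the zeros of $\cos f$, at $x_k$ with $f(x_k)=\pi/2+k\pi$, i.e.\ $\sqrt{x_k}=(\pi/4+k\pi)/(2b)$, are crossed slowly. Near a zero the cosine is only of size $\asymp n^{-1/2}$, which is the same order as the error. This is the main obstacle.

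My plan is to quantify: for $n$ near $x_k$ write $\cos f(n)=\mp\sin(f(n)-f(x_k))\approx \mp b(n-x_k)/\sqrt{x_k}$. Thus the values of $\sqrt{n}\,\cos f(n)$ for consecutive integers $n$ near $x_k$ form an arithmetic progression with step about $b$, a fixed nonzero constant. The error is $O(1)$ after multiplying by $\sqrt n$, and that is not small relative to $b$. So I need the error made explicit to order $n^{-1/2}$, i.e.\ a second-order term in the asymptotic.

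I would first try to extract from the saddle point/circle method proof of Theorem~B a refined expansion
\[
(-1)^nV_j(n)=\alpha_j\frac{e^{2a\sqrt n}}{\sqrt n}\Bigl(\cos f(n)+\frac{\beta\cos f(n)+\gamma\sin f(n)}{\sqrt n}+O(n^{-1})\Bigr),
\]
which amounts to $\Re\bigl(Ce^{2\sqrt{nW}}n^{-1/2}(1+c_1n^{-1/2}+O(n^{-1}))\bigr)$. The bracket equals $\rho\cos(f(n)-\delta_n)+O(n^{-1})$ with a shifted phase $\delta_n=O(n^{-1/2})$. So, up to $O(n^{-1})$, which is now $o(b/\sqrt n)$, the sign changes at a shifted zero $\tilde x_k=x_k+O(1)$. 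Near it the bracket is $\approx b(n-\tilde x_k)/n$ up to sign, with error $O(1/n)\cdot$small. I must still ensure $n$ is not within $O(1)$ of $\tilde x_k$ in a way that makes the error dominate. The robust observation is this: each real zero of the continuous approximant lies between two integers $m<\tilde x_k<m+1$, and the main term changes sign there. Since $(-1)^nV_j(n)$ must take values at $m-1,m,m+1,m+2$ whose signs follow the sign of the approximant once $|n-\tilde x_k|\ge 1$, there is at least one sign change of $(-1)^nV_j(n)$ in a window of bounded length around $\tilde x_k$. If the error does not permit even this, I would use an odd-number-of-sign-changes argument. Far from $\tilde x_k$ on the two sides the signs of $(-1)^nV_j(n)$ are opposite, so an odd number of consecutive sign changes of $(-1)^nV_j(n)$ occurs in the window. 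Each one is an exceptional pair $N$ with $|N-x_k|=O(1)$. Only the existence of one per $k$ is needed. Taking one such $N_k$ per large $k$ gives $N_k\sim \pi^2k^2/(4b^2)$. Then $N_k\ge10k^2$ follows after reindexing, provided $\pi^2/(4b^2)>10$, which I would check numerically from $W_j$, and also check small $k$ against Table~\ref{Table}.

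For (ii), note $|V_j(n)|=|\alpha_j|e^{2a\sqrt n}n^{-1/2}\,|\cos f(n)+O(n^{-1/2})|$. In the window the bracket behaves like $|b(n-\tilde x_k)|/n+O(n^{-1})$ while the prefactor changes only by a factor $1+O(n^{-1/2})$ per step. Away from the window, at distance $\ge C$, the values $|V_j(n)|$ increase in $|n-\tilde x_k|$. So the minimum of $|V_j(n)|$ over the window is a local minimum of the whole sequence. I would argue it is attained at $N_k$ or $N_k+1$, since at an exceptional pair the approximant changes sign and both values are the two closest to $\tilde x_k$. The delicate point, as above, is whether the $O(n^{-1})$ remainder can reorder values within the window. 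If so, I would instead choose $N_k$ as the sign change adjacent to the window minimum, which exists because near the minimum the sign of the bracket must flip.
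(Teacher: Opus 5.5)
There is a genuine gap. Your first reduction is valid, but it throws away exactly the information the proof needs. In Theorem~B the factor $1+O(n^{-1/2})$ \emph{multiplies} the cosine, and the only additive error is $O(e^{a\sqrt n}/\sqrt n)=o(e^{2a\sqrt n}/n)$. By rewriting this as $\cos f(n)+O(n^{-1/2})$ you create the ``main obstacle'' yourself. With the multiplicative form, the sign of the main term is exactly that of $\alpha_j\cos f(n)$, and its size is $|\cos f(n)|$ up to a factor $1+O(n^{-1/2})$, even when $|\cos f(n)|\asymp n^{-1/2}$. Your argument then rests on a second-order expansion to be extracted from the circle-method proof in \cite{kalita2026}. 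That expansion is not established, and given Theorem~B as stated it is not needed.

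Even granting it, your route needs an arithmetic input you never supply. You say you ``must still ensure'' the integers nearest the shifted zero are not too close to it, but you give no argument that this happens for infinitely many $k$. In the paper the corresponding statement is Theorem~\ref{thm:M and delta}: for infinitely many $m$, $t_m=\kappa(m+\frac14)^2$ with $\kappa=\pi^2/\lambda^2$ satisfies $\dist(t_m,\Z)\ge\delta$. When $\kappa\notin\Q$ this follows from equidistribution modulo $1$. When $\kappa\in\Q$ it follows from periodicity modulo $1$ together with $\kappa\notin\Z$, which comes from Bachraoui's bounds on $D(e^{\pi i/3})$ and $4\pi^2/D(e^{\pi i/3})\notin\Z$ \cite{Bach2026}.

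Your fallback does not close the gap. With only an additive $O(n^{-1/2})$ error, the window argument does give a sign change of $(-1)^nV_j(n)$ within $O(1)$ of each zero. It has two defects:
\begin{itemize}
\item It cannot exclude $V_j(n)=0$ inside the window, and the pattern $+,0,-$ for $(-1)^nV_j(n)$ contains no same-sign pair.
\item It gives no control of relative sizes, since consecutive values of $|\cos f(n)|$ differ by about $b\,n^{-1/2}$, the same order as the error.
\end{itemize}
So neither ``$|V_j(n)|$ increases away from the window'' nor your choice of $N_k$ is justified. For example, suppose $n(-1)^nV_j(n)/(\alpha_je^{2a\sqrt n})$ takes the values $5,1,3,-4,-2,-6$ at six consecutive $n$ around the zero, which is compatible with an $O(n^{-1/2})$ error with unspecified constant. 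Then the only sign change $(3,-4)$ contains no local minimum of $|V_j|$, and the window minimum is adjacent to no sign change.

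The paper avoids all this. For $m\in\M$ and $N_m=\lfloor t_m\rfloor$, separation gives:
\begin{itemize}
\item $|\cos f(N_m+r)|\asymp N_m^{-1/2}$ for $r\in\{-1,0,1,2\}$;
\item opposite signs of $\cos f$ at $N_m$ and $N_m+1$;
\item $|\cos f(N_m-1)|\ge(1+\epsilon)|\cos f(N_m)|$ and $|\cos f(N_m+2)|\ge(1+\epsilon)|\cos f(N_m+1)|$.
\end{itemize}
The prefactor changes only by $1+O(N_m^{-1/2})$ between neighbours and the additive error is exponentially smaller, so (i) and (ii) follow directly.

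Finally, $\pi^2/(4b^2)=\kappa$ is about $4.64$ for $j=2$ and $9.72$ for $j=3,4$, so your proposed numerical check would fail. The bound $N_k\ge 10k^2$ is obtained simply by passing to a subsequence.
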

    We prove Theorem~\ref{thm:main} in a broader setting that places the exceptional sign pair phenomenon within a unified framework.
    More precisely, Theorem~\ref{thm:Gen} in Section~\ref{sec:final} establishes a general result on exceptional sign pairs for the class of sequences satisfying the following asymptotics, from which Theorem~\ref{thm:main} follows immediately. 
    Fix $\lambda\in\{2\Im\bigl(\sqrt{W_j}\bigr): j=2,3,4\}$ and set $\phi=\pi/4$. 
    We retain $\phi$ in the notation since, when $\pi^2/\lambda^2$ is irrational, our results extend to arbitrary $\phi\in\mathbb{R}$.
    Let $V(n)$ be a sequence of integers satisfying
    \begin{equation}\label{V_n}
     V(n) \= (-1)^{n}\alpha\frac{e^{\beta\sqrt{n}}}{\sqrt{n}}
    \cos\bigl(\lambda\sqrt{n}+\phi)
    \left(1+O(n^{-\frac{1}{2}})\right)+O\left(\frac{e^{\gamma\sqrt{n}}}{\sqrt{n}}\right), 
    \end{equation}
    as $n\to\infty$, where $\alpha\neq0,\beta,\gamma$ are real numbers with $\beta>\gamma>0$.

    The sign behavior of $V(n)$ is governed by the asymptotics in~\eqref{V_n}. 
    We refer to the first term as the dominant term.
    Whenever this term dominates the error, the sign of $V(n)$ is determined, up to the fixed factor $\sgn(\alpha)$, by $(-1)^n$ and the cosine term. 
    Near a zero of the cosine, its sign change cancels the sign change coming from the $(-1)^n$ factor, giving rise to the exceptional sign pairs.
    More precisely, if there is a zero of the cosine factor between $\sqrt n$ and $\sqrt{n+1}$ and it remains sufficiently separated from these endpoints, then the dominant term determines the signs of both $V(n)$ and $V(n+1)$, which are consequently the same.
    The uniform separation theorem established in Section~\ref{sec:Sep Thm} guarantees the existence of infinitely many such zeros.
    The local minimum property follows by comparing the magnitudes of the dominant terms at the exceptional indices with those at their respective neighboring indices, and then controlling the error.
    
    The general framework considered here is structurally distinct from the setting of Bachraoui~\cite{Bach2026} for $v_1(q)$.
    In particular, $V_1(n)$ lies outside the general family underlying our results.
    This structural difference is reflected in the resulting sign behavior.
    In Bachraoui's setting, the asymptotic formula separates according to parity and involves two trigonometric factors, leading to infinitely many blocks of three consecutive coefficients having the same sign.
    In our setting, the dominant term contains a single oscillatory term along with the $(-1)^n$ sign factor. At the level of the dominant term, this structure gives rise to exceptional pairs rather than longer blocks, in contrast with the same sign triples arising in Bachraoui's case.
    Indeed, a block of three consecutive coefficients having the same sign would require the cosine to change sign between both $\sqrt n$ and $\sqrt{n+1}$ as well as $\sqrt{n+1}$ and $\sqrt{n+2}$ for some $n$, hence would require two of its zeros to lie in the interval $(\sqrt n,\sqrt{n+2})$. 
    This cannot occur infinitely often, since consecutive zeros of $\cos(\lambda x+\phi)$ are separated by $\pi/\lambda$, which exceeds $\sqrt{n+2}-\sqrt n$ for all sufficiently large $n$. 

    With these conjectures and their analogs resolved, it is natural to ask whether these $q$-series reflect a deeper automorphic structure, a question that we leave for future work.

    The rest of the paper is organized as follows. In Section~\ref{sec:Sep Thm}, we prove a uniform separation theorem which provides the key ingredient for proving the existence of infinitely many exceptional sign pairs. Section~\ref{sec:osc lemmas} is devoted to establishing several lemmas on the behavior of the oscillatory term in the asymptotic expansion of $V(n)$ as given in~\eqref{V_n}.
    Finally, in Section~4, we prove our general exceptional sign pair theorem, Theorem~\ref{thm:Gen}, and then specialize it to the $q$-series $v_2(q),v_3(q),$ and $v_4(q)$ to obtain Theorem~\ref{thm:main}.
    \medskip
    
    \paragraph*{\it{Acknowledgements}}
    The author is grateful to Kathrin Bringmann and Larry Rolen for many helpful discussions and valuable comments. 
    The author also thanks George Andrews for his kind encouragement, and Amanda Folsom, Debanjana Kundu, Jyotirmoy Sengupta, and Wadim Zudilin for helpful suggestions.

\section{The Oscillatory Factor and a Uniform Separation Theorem}\label{sec:Sep Thm}

    Denote the oscillatory factor in the asymptotics of $V(n)$ by
    \begin{equation}\label{F}
    F(x)\coloneqq\cos(\lambda x + \phi),
    \end{equation}
    for $x\ge0$.
    The zeros of $F$ are 
    \begin{alignat}{2}
    s_m
    &\coloneqq \frac{\pi-2\phi+2m\pi}{2\lambda},
    &\qquad & m \in \Z_{\geq 0},
    \label{s_m}\\
    \intertext{and we define}
    t_m
    &\coloneqq s_m^2.
    &&\label{t_m}\\
    \intertext{Observe that $(t_m)_{m\geq 0}$ is a strictly increasing sequence of non-negative real numbers satisfying}
    t_m
    &\to \infty,
    &\qquad \text{as } & m \to \infty.
    \label{t_m:to-infty}
    \end{alignat}
    
    The main result of this section is the following uniform separation     theorem. 
    It shows that the quadratic sequence $(t_m)_{m\ge0}$ associated with the positive zeros of the oscillatory factor remains uniformly separated from the integers along a subsequence.  

\begin{theorem}\label{thm:M and delta} 
    There exist $\delta>0$ and an infinite set $\M\subset \Z_{\ge 0}$ such that
    \begin{equation}\label{lem:dist}
       \dist(t_m,\Z) \geq \delta, \qquad\qquad \forall m \in \M
    \end{equation}  
\end{theorem}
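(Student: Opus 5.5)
We need $t_m = s_m^2 = (a+bm)^2$, where $a=(\pi-2\phi)/(2\lambda)$ and $b=\pi/\lambda$. Expanding,
\[
t_m = b^2 m^2 + 2ab\, m + a^2 .
\]
With $\phi=\pi/4$ we get $a=\pi/(4\lambda)=b/4$, so $t_m = b^2(m+1/4)^2$.

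The plan is to split according to whether $b^2=\pi^2/\lambda^2$ is rational or irrational, and in each case find an infinite set $\M$ on which $t_m$ mod $1$ stays a fixed positive distance from $0$.

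\emph{Irrational case.} The leading coefficient of the polynomial $P(m)=b^2m^2+2abm+a^2$ is irrational. By Weyl's equidistribution theorem for polynomials, the sequence $(t_m \bmod 1)_{m\ge0}$ is equidistributed in $[0,1)$. Hence the set of $m$ with $t_m \bmod 1\in[1/4,3/4]$ has density $1/2$. In particular it is infinite, so we may take $\delta=1/4$. This case works for any $\phi$, matching the remark in the introduction.

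\emph{Rational case.} Write $b^2=p/q$ in lowest terms. Then
\[
t_m = \frac{p}{q}\Bigl(m+\frac14\Bigr)^2 = \frac{p(4m+1)^2}{16q},
\]
which is a rational number with denominator dividing $16q$. So $t_m \bmod 1$ takes values in the finite set $\frac{1}{16q}\Z/\Z$, and depends only on $m$ modulo $16q$. It therefore suffices to exhibit one residue class $m\equiv m_0 \pmod{16q}$ for which $16q\nmid p(4m_0+1)^2$. Then $t_m\notin\Z$ for every $m$ in that class, and $\dist(t_m,\Z)\ge 1/(16q)$ there.

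The existence of such a class is immediate. Since $4m+1$ is odd, the $2$-adic valuation of $p(4m+1)^2$ equals $v_2(p)$. If $p$ is even, then $q$ is odd, but we also need $v_2(p)\le 3$; this is not automatic. Rather than pursue valuations, a cleaner route is the following. If $t_m\in\Z$ for all large $m$, then $m\mapsto t_m$ is a quadratic polynomial that is integer-valued on all large integers. Its second difference $2b^2$ would then be an integer, and its first difference $b^2(2m+1)+2ab$ would also be an integer. Since $a=b/4$, the first difference equals $b^2(2m+3/2)$. Taking $m=0$ gives $3b^2/2\in\Z$, and together with $2b^2\in\Z$ this forces $b^2/2\in\Z$.

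So the only bad situation is $b^2\in 2\Z$. This must be ruled out using the actual values of $\lambda$. Here $\lambda=2\Im\sqrt{W_j}$ with $W_j$ involving $D(e^{\pi i/3})$. Proving that $\pi^2/\lambda^2$ is not an even integer is the main obstacle, since the irrationality of these quantities is open. I would handle it by a numerical computation: $D(e^{\pi i/3})\approx1.0149$, which gives $\lambda\approx1.42$ for $W_3,W_4$ and $\pi^2/\lambda^2\approx4.9$, with a similar value for $W_2$. With rigorous interval bounds this shows $\pi^2/\lambda^2$ is not an even integer. Then $t_m\notin\Z$ for infinitely many $m$, and by periodicity modulo $16q$ these $m$ fill a whole residue class. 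This gives $\M$ and $\delta=1/(16q)$.

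Combining the two cases proves the theorem.
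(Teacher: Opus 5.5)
Your proposal is correct and follows essentially the paper's proof. It uses Weyl equidistribution when $\pi^2/\lambda^2$ is irrational; in the rational case it uses periodicity of $t_m \bmod 1$ and reduces to a numerical non-integrality check on $\pi^2/\lambda^2$. You derive the necessary condition $\pi^2/\lambda^2\in 2\Z$ by finite differences where the paper simply uses $t_0=\pi^2/(16\lambda^2)\in\Z$, and your ``take $m=0$'' step should instead use a large $m$ together with $2b^2\in\Z$ (or periodicity). There is one numerical slip: for $W_3,W_4$ one has $\lambda=2\Im\sqrt{W_3}=\sqrt{D(e^{\pi i/3})}\approx 1.007$, so $\pi^2/\lambda^2\approx 9.72$ rather than $1.42$ and $4.9$; this is still not an even integer, so the argument goes through.
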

\begin{proof}
    Write 
    \[
    t_m = \kappa(m+\mu)^2,
    \]
    where $\kappa\coloneqq\tfrac{\pi^2}{\lambda^2}$ and $\mu\coloneqq\tfrac 1 2 - \tfrac \phi \pi$. 
    Define the polynomial $P(x)$ by
    \[
    P(x)\coloneqq\kappa(x + \mu)^2.
    \]
    
    \textbf{Case I. $\kappa$ is irrational:} Similar to~\cite{Bach2026}, $P(x)$ is a quadratic polynomial whose leading coefficient is irrational. 
    By Theorem 3.2 in~\cite[p.~27]{KuipN}, the sequence $(P(m))_{m\geq 0}$ is equidistributed modulo $1$.
    Hence, infinitely many $m$ satisfy 
    \[
    P(m)\bmod 1\in\left[\frac{1}{4},\frac{3}{4}\right],
    \]
    and for such $m$,
    \[
    \dist(P(m),\Z)\geq\frac{1}{4}.
    \]
    That is, the inequality~\eqref{lem:dist} holds for $\delta=\tfrac{1}{4}$ and a suitable infinite set $\M\subset\Z_{\ge0}$.

    \medskip
    
    \textbf{Case II. $\kappa$ is rational:} Write $\kappa = \tfrac{a}{b}$ with $a,b\in\Z,\,b>0\,\text{and}\,\gcd(a,b)=1$.
    By assumption, $\phi=\tfrac{\pi}{4}$ and so $\mu = \tfrac{1}{4}$.
    Thus,
    \[
    P(m) = \frac{a}{16b}(4m+1)^2,
    \]
    which gives
    \[
    P(m+4b)-P(m) = 2a(4m+1)+16ab \in \Z.
    \]
    Hence,
    \[
    P(m+4b)\equiv P(m)\bmod{1}, \qquad \forall m\in\Z_{\geq 0}.
    \]
    That is, the sequence $(P(m))_{m\geq 0}$ is periodic modulo $1$ with period $4b$.
    
    \begin{claim}
     There is at least one residue class $r$ modulo $4b$ such that $P(r)\notin\Z$.
    \end{claim}
    To the contrary, assume $P(m)\in\Z,\,\forall m\in\Z_{\geq 0}$. 
    Then $\kappa/16 = P(0)\in\Z$ implies $\kappa\in\Z$.
    To complete the proof of the claim, it suffices to show that $\kappa\notin\Z$. 
    This follows from~\cite[Lemma 1]{Bach2026} which proves that 
    \begin{equation}\label{ineq:BW}
    1.0147430670 < D(e^{\pi i/3}) < 1.0149758071
    \qquad
    \text{and}
    \qquad
    \frac{4\pi^2}{D(e^{\pi i/3})}\notin\Z.
    \end{equation}
    Using~\eqref{ineq:BW}, we obtain the following.
    \begin{itemize}
        \item If $\lambda = 2\Im\sqrt{W_2}$, then using the inequality in~\eqref{ineq:BW}, we find
        \begin{equation}\label{ineq:lambda 1}
           4.6361272<\kappa = \frac{\pi^2}{(2\Im\sqrt{W_2})^2} = \frac{1}{\sqrt{\tfrac{1}{144}+\tfrac{D(e^{\pi i/3})^2}{\pi^4}}+\tfrac{1}{12}}<4.6365211,
        \end{equation}
        which implies $\kappa\notin\Z$.
        \item If $\lambda = 2\Im\sqrt{W_3} = 2\Im\sqrt{W_4} = \sqrt{D(e^{\pi i/3})}$, then $\kappa = \tfrac{\pi^2}{D(e^{\pi i/3})}\notin\Z$; otherwise $\tfrac{4\pi^2}{D(e^{\pi i/3})}=4\kappa\in\Z$ contradicting~\eqref{ineq:BW}. 
    \end{itemize}
    This completes the proof of the claim.

    We may now consider the non-empty finite set 
    \[
    S\coloneqq \{\dist(P(r),\Z): 0\leq r < 4b,\, P(r)\notin\Z\},
    \]
    and let $\delta\coloneqq \min S > 0$.
    Also define
    \[
    \M\coloneqq\{r+4kb: 0\leq r < 4b,\,P(r)\notin\Z,\,k\in\Z_{\geq0}\}.
    \]
    It follows from the claim that $\M$ is infinite. Moreover, if $m = r+4kb\in \M$, then $P(m)\equiv P(r)\bmod 1$,
    and hence
    \[
    \dist(P(m),\Z) = \dist(P(r),\Z)\ge \delta.
    \]
    This completes the proof.
\end{proof}

\begin{remark}
    The first case of the proof remains valid for any $\lambda\in\R^*$ such that $\pi^2/\lambda^2$ is irrational and for any $\phi\in\mathbb{R}$. Consequently, Theorem~\ref{thm:M and delta} holds under these assumptions.
\end{remark}

\section{Estimates Near the Zeros of the Oscillatory Factor}\label{sec:osc lemmas}

    In this section, we analyze the behavior of the oscillatory factor near its zeros.
    We begin by identifying the integers whose square roots lie nearest to these zeros, which are later shown to be responsible for the exceptional sign pairs in our main result. 
    Specifically, the sign pattern of the oscillatory factor is determined at the square roots of these core integers, and we then extend our framework to their immediate neighbors to obtain the necessary size estimates.

    Throughout the remainder of the paper, we fix $\M$ and $\delta$ as in Theorem~\ref{thm:M and delta} and adopt the following notation.
    Write \begin{alignat}{2} 
    & x_n := \lambda\sqrt{n}+\phi,
    &\qquad & n \in \mathbb{Z}_{\geq 0},\label{x_n}
    \intertext{and set}
    & N_m := \lfloor t_m \rfloor,
    &\qquad & m \in \M.
    \end{alignat}
    By the definition of the floor function, $N_m$ is the unique integer satisfying 
    \begin{equation}\label{Def:Floor}
     N_m \le t_m <N_m+1.   
    \end{equation}
    By~\eqref{t_m:to-infty}, as $m\to\infty$ with $m\in\M$, we have $t_m\to\infty$ and hence $N_m\to\infty$. By discarding finitely many elements of $\M$, if necessary, we may assume that $N_m>1$ for all $m\in\M$.
    
\begin{lemma} \label{lem:ineq} 
    For each $m\in \M$, the following inequalities hold:
    \begin{enumerate}[label=(\alph*)]
        \item We have $N_m < t_m < N_m+1$,
        \item We have $x_{N_m} < \lambda s_m+\phi < x_{N_m+1}$,
        \item We have $\delta \le t_m-N_m \le 1-\delta$.
    \end{enumerate}
\end{lemma}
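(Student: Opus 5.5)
We will prove part (c) first and derive (a) and (b) from it.

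For (c), fix $m\in\M$ and use \eqref{Def:Floor}, which says $0\le t_m-N_m<1$. Since $N_m\in\Z$, both $N_m$ and $N_m+1$ are integers. Theorem~\ref{thm:M and delta} then gives
\[
t_m-N_m=\abs{t_m-N_m}\ge \dist(t_m,\Z)\ge\delta ,
\qquad
N_m+1-t_m=\abs{t_m-(N_m+1)}\ge\dist(t_m,\Z)\ge\delta .
\]
Rearranging the second inequality gives $t_m-N_m\le 1-\delta$, which proves (c). The argument implicitly shows $\delta\le 1/2$, since $\dist(\cdot,\Z)\le 1/2$ always, so the interval $[\delta,1-\delta]$ is nonempty.

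Part (a) follows immediately. Since $\delta>0$, (c) gives $t_m-N_m\ge\delta>0$ and $N_m+1-t_m\ge\delta>0$, so $N_m<t_m<N_m+1$ with strict inequalities.

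For (b), note that $N_m>1>0$ and $s_m\ge0$. The last claim uses $s_m=\kappa^{1/2}(m+\mu)/1\cdot$ with $\mu=\tfrac12-\tfrac\phi\pi=\tfrac14$ for $\phi=\pi/4$, and in any case $s_m\ge 0$ is implicit in $t_m=s_m^2$ being the square of a positive zero. Since the square root is strictly increasing on $[0,\infty)$, applying it to (a) gives
\[
\sqrt{N_m}<s_m<\sqrt{N_m+1}.
\]
We then multiply by $\lambda$ and add $\phi$. This requires $\lambda>0$, which holds because $\lambda=2\Im\sqrt{W_j}$ and each $W_j$ has positive imaginary part $D(e^{\pi i/3})/2>0$, so the principal square root has positive imaginary part. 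By the definition \eqref{x_n}, the result is $x_{N_m}<\lambda s_m+\phi<x_{N_m+1}$.

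None of these steps is a serious obstacle; the argument is bookkeeping. The only points that need care are confirming the signs $s_m\ge0$ and $\lambda>0$, so that the monotonicity steps are valid.
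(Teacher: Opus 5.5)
Your proof is correct and is essentially the paper's argument. The only difference is order: the paper gets (a) directly from $t_m\notin\Z$ and then reads off (c) from $\dist(t_m,\Z)=\min\{t_m-N_m,\,N_m+1-t_m\}$, whereas you prove (c) first and deduce (a) from $\delta>0$; (b) then follows from monotonicity of $u\mapsto\lambda\sqrt{u}+\phi$ in both versions, and your explicit checks that $\lambda>0$ and $s_m=\kappa^{1/2}(m+\mu)=\pi(m+\tfrac14)/\lambda>0$ make precise points the paper leaves implicit.
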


\begin{proof}
    Since $m\in \M$, it follows from Theorem~\ref{thm:M and delta} that $t_m\notin\Z$. 
    Together with~\eqref{Def:Floor}, we obtain the strict inequalities 
    \begin{equation}\label{eqn:Nmtm}
    N_m = \lfloor t_m \rfloor < t_m < \lfloor t_m\rfloor+1 =N_m+1.
    \end{equation}
    Because the function $u\mapsto\lambda\sqrt{u}+\phi$ is strictly increasing on $(0,\infty)$, it follows from~\eqref{eqn:Nmtm} that
    \[
    x_{N_m} < \lambda\sqrt{t_m}+\phi < x_{N_m+1}.
    \]
    Recalling the definition~\eqref{t_m} of $t_m$,
    \[
    x_{N_m} < \lambda s_m+\phi < x_{N_m+1}.
    \]
    Once again, Theorem~\ref{thm:M and delta} yields 
    \[
    \delta\le\dist(t_m,\Z) = \min\{t_m-N_m,N_m+1-t_m\},
    \]
    which gives
    \[
    \delta\le t_m-N_m \qquad \text{and} \qquad \delta\le N_m+1-t_m \implies t_m-N_m\le 1-\delta.
    \]
    This completes the proof.
\end{proof}

\begin{lemma}\label{lem:bound}
    There exist positive constants $C_1,C_2$ such that for $r\in\{-1,0,1,2\}$ and $m\in \M$,
     \[
    C_1N_m^{-1/2} \le \abs{x_{N_m+r}-(\lambda s_m+\phi)} \le C_2N_m^{-1/2}.
    \]
\end{lemma}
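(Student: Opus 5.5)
Throughout we assume $\lambda>0$, which holds for the values under consideration.

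The key identity is
\[
x_{N_m+r}-(\lambda s_m+\phi)=\lambda\bigl(\sqrt{N_m+r}-\sqrt{t_m}\bigr)=\lambda\,\frac{N_m+r-t_m}{\sqrt{N_m+r}+\sqrt{t_m}},
\]
since $s_m=\sqrt{t_m}$ by~\eqref{t_m}. We therefore bound the numerator and the denominator separately, uniformly in $r\in\{-1,0,1,2\}$ and $m\in\M$.

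\emph{Numerator.} Write $\theta_m\coloneqq t_m-N_m$. By Lemma~\ref{lem:ineq}(c), $\delta\le\theta_m\le 1-\delta$, so $N_m+r-t_m=r-\theta_m$. For $r\ge1$ this lies in $[\delta,\,2-\delta]$. For $r\le0$ its absolute value $\theta_m-r$ lies in $[\delta,\,2-\delta]$. Hence
\[
\delta\le\abs{N_m+r-t_m}\le 2
\]
in all four cases. This is exactly where the uniform separation of Theorem~\ref{thm:M and delta} is used: without it, the lower bound would fail for $r\in\{0,1\}$.

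\emph{Denominator.} We have $N_m\le t_m<N_m+1\le 2N_m$ and $N_m-1\le N_m+r\le N_m+2\le 2N_m$, using $N_m>1$, so $N_m\ge2$. Also $N_m-1\ge N_m/2$. Therefore
\[
\bigl(1/\sqrt2+1\bigr)\sqrt{N_m}\le\sqrt{N_m+r}+\sqrt{t_m}\le 2\sqrt2\,\sqrt{N_m}.
\]

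Combining the two estimates gives the lemma with
\[
C_1=\frac{\lambda\delta}{2\sqrt2}\qquad\text{and}\qquad C_2=\frac{2\lambda}{1+1/\sqrt2}.
\]
There is no real obstacle. The only point needing care is the case analysis for the numerator in the cases $r=-1$ and $r=2$, together with the reliance on $N_m>1$ to keep $\sqrt{N_m-1}$ comparable to $\sqrt{N_m}$.
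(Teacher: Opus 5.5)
Your proof is correct and follows the paper's argument essentially verbatim: the same rationalization $\lambda(N_m+r-t_m)/(\sqrt{N_m+r}+\sqrt{t_m})$, the numerator bound $\delta\le\abs{N_m+r-t_m}\le 2-\delta$ from Lemma~\ref{lem:ineq}(c), and a two-sided comparison of the denominator with $\sqrt{N_m}$ using $N_m>1$. The only difference is in the constants, which does not matter: the paper takes $C_1=\lambda\delta/4$ and $C_2=\lambda(2-\delta)$.
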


\begin{proof}
    For $r\in\{-1,0,1,2\}$ and $m\in\M$, we have by~\eqref{t_m} and~\eqref{x_n}
    \[
    (\lambda s_m+\phi)-x_{N_m+r} = (\lambda\sqrt{t_m}+\phi)-(\lambda\sqrt{N_m+r}+\phi) = \lambda\frac{t_m-N_m-r}{\sqrt{t_m}+\sqrt{N_m+r}}.
    \]
    Thus, by Lemma~\ref{lem:ineq}(a) and (c), it follows that
    \begin{align*}
    \lambda\frac{\delta}{\sqrt{t_m}+\sqrt{N_m+r}} \le \abs{x_{N_m+r}-(\lambda s_m+\phi)} \le \lambda\frac{2-\delta}{\sqrt{t_m}+\sqrt{N_m+r}}.
    \end{align*}
    Moreover, from Lemma~\ref{lem:ineq}(a), 
    \[
    \begin{aligned}
    & \sqrt{N_m} < \sqrt{N_m}+\sqrt{N_m+r} < \sqrt{t_m}+\sqrt{N_m+r},\\ 
    &\begin{aligned}
    \sqrt{t_m}+\sqrt{N_m+r} < \sqrt{N_m+1}+\sqrt{N_m+r}
    \le \sqrt{N_m}\sqrt{1+\frac{1}{N_m}}+\sqrt{N_m}\sqrt{1+\frac{2}{N_m}}
    < 4\sqrt{N_m}\,.
    \end{aligned}
    \end{aligned}
    \]
    Hence,
    \[
    \frac{\lambda\delta}{4}N_m^{-1/2} \le \abs{x_{N_m+r}-(\lambda s_m+\phi)} \le \lambda(2-\delta)N_m^{-1/2}.
    \]
    Therefore, the lemma holds with
    \[
    C_1 = \frac{\lambda\delta}{4}, \qquad 
    C_2 = \lambda(2-\delta).
    \]\vspace{-10pt}
\end{proof}
    By the definition of $s_m$ in~\eqref{s_m}, we have
    \[
    \lambda s_m+\phi = \frac{\pi}{2}+m\pi.
    \]
    Thus, for any $n\in\Z_{\ge0}$, we may write 
    \begin{equation}\label{eqn:F_sin}
    F\bigl(\sqrt{n}\bigr) = \cos\left(\frac{\pi}{2}+m\pi+x_n-(\lambda s_m+\phi)\right) = (-1)^{m+1}\sin\left(x_n-(\lambda s_m+\phi)\right).
    \end{equation}
    
\begin{lemma}\label{lem:sign and bounds}
    For all sufficiently large $m\in\M$,
    \begin{align*}
     \sgn \bigl(F\bigl(\sqrt{N_m}\bigr)\bigr) = (-1)^m, 
     \qquad
     \sgn\bigl(F\bigl(\sqrt{N_m+1}\bigr)\bigr) = (-1)^{m+1},
    \end{align*}
    and there exist positive constants $c_1,c_2$ such that for $r\in\{-1,0,1,2\}$,
    \[
    c_1N_m^{-1/2} \le \abs{F\bigl(\sqrt{N_m+r}\bigr)} \le c_2N_m^{-1/2}.
    \]
\end{lemma}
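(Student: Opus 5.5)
The plan is to work directly from the representation \eqref{eqn:F_sin}, which for every $n$ gives
\[
F\bigl(\sqrt{n}\bigr)=(-1)^{m+1}\sin\bigl(x_n-(\lambda s_m+\phi)\bigr).
\]
Write $y_r\coloneqq x_{N_m+r}-(\lambda s_m+\phi)$ for $r\in\{-1,0,1,2\}$. Lemma~\ref{lem:bound} gives $C_1N_m^{-1/2}\le\abs{y_r}\le C_2N_m^{-1/2}$. Since $N_m\to\infty$ as $m\to\infty$ in $\M$, we may restrict to $m\in\M$ large enough that $C_2N_m^{-1/2}\le \pi/2$. For such $m$ every $y_r$ lies in $[-\pi/2,\pi/2]$ and is nonzero.

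\textbf{Signs.} By Lemma~\ref{lem:ineq}(b), $y_0<0<y_1$. Because $\sin$ is odd and positive on $(0,\pi/2]$, we get $\sgn\sin(y_0)=-1$ and $\sgn\sin(y_1)=+1$. Substituting into \eqref{eqn:F_sin}:
\begin{itemize}
\item $\sgn F(\sqrt{N_m})=(-1)^{m+1}\cdot(-1)=(-1)^m$;
\item $\sgn F(\sqrt{N_m+1})=(-1)^{m+1}$.
\end{itemize}
These are exactly the claimed signs.

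\textbf{Bounds.} For $\abs{y}\le\pi/2$ we use the elementary inequalities $\tfrac{2}{\pi}\abs{y}\le\abs{\sin y}\le\abs{y}$, the lower one being Jordan's inequality. Combining with Lemma~\ref{lem:bound},
\[
\tfrac{2C_1}{\pi}N_m^{-1/2}\le\abs{F\bigl(\sqrt{N_m+r}\bigr)}=\abs{\sin y_r}\le C_2N_m^{-1/2}.
\]
So the statement holds with $c_1=2C_1/\pi$ and $c_2=C_2$. The case $r=-1$ needs $N_m-1\ge0$, which holds since we assumed $N_m>1$.

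The only point requiring care is that the $y_r$ must lie in the interval where Jordan's inequality applies. This is handled by choosing $m$ large, which is why the lemma is stated for sufficiently large $m\in\M$. Everything else follows immediately from the earlier lemmas.
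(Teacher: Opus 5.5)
Your proof is correct and follows essentially the same argument as the paper. Both rewrite $F(\sqrt{N_m+r})$ via \eqref{eqn:F_sin} and read off the signs from Lemma~\ref{lem:ineq}(b). Both then combine Lemma~\ref{lem:bound} with $\tfrac{2}{\pi}|h|\le|\sin h|\le|h|$ to get $c_1=2C_1/\pi$ and $c_2=C_2$; the only cosmetic difference is that the paper takes $m$ large enough that $|h|<1$ rather than $|h|\le\pi/2$.
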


\begin{proof}
     Note that if $\abs{h} < 1$, then 
     \begin{align}\label{ineq:h-sin}
     \sgn(\sin h) = \sgn(h) 
     \qquad
     \text{and}
     \qquad
     \frac{2}{\pi}\abs{h} \le \abs{\sin h} \le \abs{h}.
     \end{align}
     Let $C_1,C_2$ be as in Lemma~\ref{lem:bound}, and fix $r\in\{-1,0,1,2\}$. 
     By Lemma~\ref{lem:bound},
    \begin{align*}
    \abs{x_{N_m+r}-(\lambda s_m+\phi)} \le C_2N_m^{-1/2},
    \end{align*}
    so for sufficiently large $m$,
    \begin{equation*}
    \abs{x_{N_m+r}-(\lambda s_m+\phi)} < 1. 
    \end{equation*}
     Moreover, Lemma~\ref{lem:ineq}(b) implies 
     \begin{align*}
      & x_{N_m}-(\lambda s_m+\phi) < 0, \\
      & x_{N_m+1}-(\lambda s_m+\phi) > 0,
     \end{align*} 
     and Lemma~\ref{lem:bound} implies
      \[
     C_1N_m^{-1/2} \le \abs{x_{N_m+r}-(\lambda s_m+\phi)} \le C_2N_m^{-1/2}.
     \]
     Therefore, the desired sign and size bounds follow using~\eqref{eqn:F_sin} and~\eqref{ineq:h-sin} with
      \[
     c_1 = \frac{2}{\pi}C_1, \qquad c_2 = C_2.
     \]
\end{proof}

\begin{lemma}\label{lem:inq:abs F}
    There exists $\epsilon>0$ such that for all sufficiently large $m\in\M$,
    \begin{align*}
    \abs{F\bigl(\sqrt{N_m-1}\bigr)} > (1+\epsilon)\abs{F\bigl(\sqrt{N_m}\bigr)} \quad \text{ and }
    \quad \abs{F\bigl(\sqrt{N_m+2}\bigr)} > (1+\epsilon)\abs{F\bigl(\sqrt{N_m+1}\bigr)}.
    \end{align*}
\end{lemma}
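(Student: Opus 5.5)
The plan is to work directly with the representation \eqref{eqn:F_sin}. Write $h_n \coloneqq x_n-(\lambda s_m+\phi)$, so that $\abs{F(\sqrt n)}=\abs{\sin h_n}$ for every $n$. The two desired inequalities will follow from two facts: $\abs{h_{N_m-1}}$ is at least roughly twice $\abs{h_{N_m}}$, and $\abs{h_{N_m+2}}$ is at least roughly twice $\abs{h_{N_m+1}}$. Since all four quantities are $O(N_m^{-1/2})$ by Lemma~\ref{lem:bound}, $\sin$ is essentially linear on them.

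\textbf{Step 1: exact formulas for the $h_n$.} As in the proof of Lemma~\ref{lem:bound},
\[
h_n=\lambda\,\frac{n-t_m}{\sqrt{t_m}+\sqrt n}.
\]
Set $\theta\coloneqq t_m-N_m$, so that $\delta\le\theta\le 1-\delta$ by Lemma~\ref{lem:ineq}(c). Then
\[
\frac{\abs{h_{N_m-1}}}{\abs{h_{N_m}}}=\frac{1+\theta}{\theta}\cdot\frac{\sqrt{t_m}+\sqrt{N_m}}{\sqrt{t_m}+\sqrt{N_m-1}}\ \ge\ \frac{1+\theta}{\theta}\ \ge\ 1+\frac{1}{1-\delta}\ \ge 2.
\]
Similarly, put $u\coloneqq N_m+1-t_m\in[\delta,1-\delta]$. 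Then
\[
\frac{\abs{h_{N_m+2}}}{\abs{h_{N_m+1}}}=\frac{1+u}{u}\cdot\frac{\sqrt{t_m}+\sqrt{N_m+1}}{\sqrt{t_m}+\sqrt{N_m+2}}\ \ge\ 2\Bigl(1-O\bigl(N_m^{-1}\bigr)\Bigr).
\]
Here the second factor is $1-O(N_m^{-1})$, because $\sqrt{N_m+2}-\sqrt{N_m+1}=O(N_m^{-1/2})$ while the denominator is $\gg\sqrt{N_m}$. The only care needed is this second factor in the right-hand inequality, since it is less than $1$. That is why the bound $\theta,u\le 1-\delta$, which the separation Theorem~\ref{thm:M and delta} provides, is essential: it gives a uniform margin $(1+u)/u\ge 2$ that absorbs the $O(N_m^{-1})$ loss.

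\textbf{Step 2: pass from $h$ to $\sin h$.} For $\abs{h}<1$ we have $\abs{h}(1-h^2/6)\le\abs{\sin h}\le\abs h$. Lemma~\ref{lem:bound} gives $\abs{h_{N_m+r}}\le C_2N_m^{-1/2}$ for $r\in\{-1,0,1,2\}$. Hence
\[
\frac{\abs{F(\sqrt{N_m-1})}}{\abs{F(\sqrt{N_m})}}\ \ge\ \frac{\abs{h_{N_m-1}}}{\abs{h_{N_m}}}\Bigl(1-\tfrac{C_2^2}{6N_m}\Bigr)\ \ge\ 2\Bigl(1-O\bigl(N_m^{-1}\bigr)\Bigr),
\]
and the same bound holds for $\abs{F(\sqrt{N_m+2})}/\abs{F(\sqrt{N_m+1})}$. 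Here $\abs{F(\sqrt{N_m})}$ and $\abs{F(\sqrt{N_m+1})}$ are nonzero by Lemma~\ref{lem:sign and bounds}.

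\textbf{Step 3: conclude.} Since $N_m\to\infty$ along $\M$, both ratios exceed $3/2$ for all sufficiently large $m\in\M$. The lemma therefore holds with $\epsilon=1/2$. Any fixed $\epsilon<1/(1-\delta)$ would also work.
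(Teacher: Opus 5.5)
Your argument is correct and is essentially the paper's proof. Both use the exact formula $h_n=\lambda(n-t_m)/(\sqrt{t_m}+\sqrt n)$ to get ratios $\ge 2+o(1)$ and then pass to $\abs{\sin h}$. The only difference is that the paper uses the cruder bound $\frac{2}{\pi}\abs{h}\le\abs{\sin h}\le\abs{h}$, so its ratios are only $\ge 4/\pi+o(1)$, whereas your cubic Taylor bound keeps them near $2$ and gives the explicit $\epsilon=1/2$.

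One small correction to your commentary: the upper bound $\theta,u\le 1-\delta$ is not essential here. The bounds $(1+\theta)/\theta\ge 2$ and $(1+u)/u\ge 2$ already follow from $0<\theta,u<1$, which only requires $t_m\notin\Z$, and $2-o(1)$ comfortably exceeds $3/2$. The uniform separation $\delta$ matters instead for the lower bounds in Lemma~\ref{lem:sign and bounds}, which are used later to dominate the error term.
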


\begin{proof}
    By Lemma~\ref{lem:ineq}(a),
    \begin{align}\label{ineq:abs N-t}
    \abs{N_m-1-t_m} \ge 2\abs{N_m-t_m} > 0 \quad \text{and} \quad
    \abs{N_m+2-t_m} \ge 2\abs{N_m+1-t_m} > 0.
    \end{align}
    Using the Taylor expansion
    \[
    \sqrt{1+x} = 1+O(\abs{x}),
    \]
    as $x\to0$, we obtain 
    \begin{equation}\label{Big O:t}
        \sqrt{t_m} = \sqrt{N_m}\sqrt{1+\frac{t_m-N_m}{N_m}} = \sqrt{N_m}\left(1+O\left(N_m^{-1}\right)\right) = \sqrt{N_m} + O\left(N_m^{-1/2}\right).
    \end{equation}
    For $r\in\{0,2\}$, we have
    \begin{equation}\label{Big O:N_N-1}
        \sqrt{N_m+r} - \sqrt{N_m+r-1} = \frac{1}{\sqrt{N_m+r}+\sqrt{N_m+r-1}}=O\left(N_m^{-1/2}\right).
    \end{equation}
    Using~\eqref{ineq:abs N-t},~\eqref{Big O:t}, and~\eqref{Big O:N_N-1}, it follows that
    \begin{subequations}
    \label{est:N_m-1--n_m o}
     \begin{align}
        & \abs{\frac{x_{N_m-1}-(\lambda s_m+\phi)}{x_{N_m}-(\lambda s_m+\phi)}} = \abs{\frac{N_m-1-t_m}{N_m-t_m}}\left(1+\frac{\sqrt{N_m}-\sqrt{N_m-1}}{\sqrt{N_m-1}+\sqrt{t_m}}\right) \ge 2+o(1), \\
        & \abs{\frac{x_{N_m+2}-(\lambda s_m+\phi)}{x_{N_m+1}-(\lambda s_m+\phi)}} = \abs{\frac{N_m+2-t_m}{N_m+1-t_m}}\left(1+\frac{\sqrt{N_m+1}-\sqrt{N_m+2}}{\sqrt{N_m+2}+\sqrt{t_m}}\right) \ge 2+o(1).
    \end{align}   
    \end{subequations}
    From~\eqref{eqn:F_sin},~\eqref{ineq:h-sin}, and Lemma~\ref{lem:bound}, 
    \begin{align*}
    \abs{x_{N_m+r}-(\lambda s_m+\phi)} \ge \abs{F\bigl(\sqrt{N_m+r}\bigr)} \ge \frac{2}{\pi}\abs{x_{N_m+r}-(\lambda s_m+\phi)},
    \end{align*}
    for $r\in\{-1,0,1,2\}$ and all sufficiently large $m\in\M$.
    Together with~\eqref{est:N_m-1--n_m o}, it follows that there exists $\epsilon>0$ satisfying
    \begin{subequations}
    \begin{align*}
    & \abs{F\bigl(\sqrt{N_m-1}\bigr)} \ge (1+\epsilon)\abs{F\bigl(\sqrt{N_m}\bigr)}, \\
    & \abs{F\bigl(\sqrt{N_m+2}\bigr)} \ge (1+\epsilon)\abs{F\bigl(\sqrt{N_m+1}\bigr)},
    \end{align*}   
    \end{subequations}
    for all sufficiently large $m\in\M$.   
\end{proof}
\begin{remark}
    All the results in this section hold for any $\lambda > 0$ such that $\pi^2/\lambda^2$ is irrational and any $\phi\in\R$, provided one restricts to sufficiently large $m$ so that $s_m \ge 0$. 
\end{remark}

\section{Exceptional Sign Pairs and Local Minima}\label{sec:final}

    In this section, we combine the results of the previous section with the asymptotic formula for $V(n)$ to prove the existence of infinitely many exceptional sign pairs and local minima.
    
\begin{theorem}\label{thm:Gen}
    Let $V(n)$ be any integer sequence that satisfies~\eqref{V_n}.
    For all sufficiently large $m\in\M$, $V(N_m)$ and $V(N_{m}+1)$ have the same sign.
    Furthermore, at least one of $\abs{V(N_m)}$ and $\abs{V(N_{m}+1)}$ is a local minimum of the sequence $(\abs{V(n)})_{n\ge0}$. 
\end{theorem}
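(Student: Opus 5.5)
The plan is to write $V(n) = (-1)^n\alpha\,\frac{e^{\beta\sqrt n}}{\sqrt n}F(\sqrt n)\bigl(1+O(n^{-1/2})\bigr) + E(n)$ with $|E(n)| \ll e^{\gamma\sqrt n}/\sqrt n$. Then show that at $n=N_m+r$, $r\in\{-1,0,1,2\}$, the main term has size $\asymp e^{\beta\sqrt{n}}n^{-1}$. This dominates $E(n)$ because $\beta>\gamma$, so $e^{(\gamma-\beta)\sqrt n}\sqrt{n}\to0$.

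\textbf{Step 1: signs.} By Lemma~\ref{lem:sign and bounds}, for large $m\in\M$ we have $|F(\sqrt{N_m+r})|\ge c_1N_m^{-1/2}$. Hence the main term at $n=N_m+r$ has absolute value at least $\tfrac{|\alpha|c_1}{2}\,e^{\beta\sqrt n}N_m^{-1/2}n^{-1/2}$. This exceeds $|E(n)|$ for $m$ large, so $\sgn V(n)$ equals the sign of the main term. Lemma~\ref{lem:sign and bounds} gives
\[
\sgn V(N_m)=\sgn(\alpha)(-1)^{N_m}(-1)^m
\quad\text{and}\quad
\sgn V(N_m+1)=\sgn(\alpha)(-1)^{N_m+1}(-1)^{m+1}.
\]
These agree, which proves the first claim.

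\textbf{Step 2: local minimum.} The local minimum should be $|V(N_m)|$ if $|V(N_m)|\le|V(N_m+1)|$, and $|V(N_m+1)|$ otherwise. In the first case it suffices to show $|V(N_m-1)|>|V(N_m)|$; in the second, $|V(N_m+2)|>|V(N_m+1)|$. Write $A(n)=|\alpha|e^{\beta\sqrt n}n^{-1/2}$, so that $|V(n)|=A(n)|F(\sqrt n)|(1+O(n^{-1/2}))+O(e^{\gamma\sqrt n}/\sqrt n)$.

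For the left neighbor, $A(N_m-1)/A(N_m)=e^{\beta(\sqrt{N_m-1}-\sqrt{N_m})}(1+O(N_m^{-1}))=1+O(N_m^{-1/2})$. Combined with Lemma~\ref{lem:inq:abs F}, this gives
\[
A(N_m-1)|F(\sqrt{N_m-1})|\ge(1+\epsilon)\bigl(1+O(N_m^{-1/2})\bigr)A(N_m)|F(\sqrt{N_m})|.
\]
The multiplicative factors $(1+O(n^{-1/2}))$ and the additive errors are each $o(1)$ relative to the main terms, by Step 1's lower bound. Therefore $|V(N_m-1)|\ge(1+\epsilon/2)|V(N_m)|$ for large $m$. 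The right neighbor is easier: $A(N_m+2)\ge A(N_m+1)$ once $n$ is large, since $A$ is eventually increasing, and Lemma~\ref{lem:inq:abs F} then gives $|V(N_m+2)|>|V(N_m+1)|$ by the same error absorption.

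\textbf{Main obstacle.} The main difficulty is the bookkeeping in Step 2. The gain is only the fixed factor $1+\epsilon$, and the comparisons must be done at scale $e^{\beta\sqrt n}/n$ rather than $e^{\beta\sqrt n}/\sqrt n$, because $F$ is only of size $N_m^{-1/2}$ near its zero. All errors must be shown to be $o$ of this smaller quantity. That holds since $e^{(\gamma-\beta)\sqrt n}\sqrt n\to0$ and the relative error is $O(n^{-1/2})$. The shift $e^{\beta(\sqrt{N_m-1}-\sqrt{N_m})}$ also costs only a factor $1+O(N_m^{-1/2})$.

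Finally, Theorem~\ref{thm:main} follows by applying this result for each $j$, taking $\lambda = 2\Im\sqrt{W_j}$, $\beta = 2\Re\sqrt{W_j}$, $\gamma=\beta/2$, and $\alpha=\alpha_j$, using Theorem~B. Since $N_m\asymp m^2$, passing to a sparse enough subsequence of $\M$ gives $N_k\ge 10k^2$.
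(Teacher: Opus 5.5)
Your proposal is correct and follows essentially the same route as the paper. You split $V(n)$ into main term plus error, and use Lemma~\ref{lem:sign and bounds} both for the common sign $(-1)^{N_m+m}\sgn(\alpha)$ at $N_m$ and $N_m+1$ and for the lower bound of order $e^{\beta\sqrt{n}}/n$ that absorbs the error (since $\sqrt{n}\,e^{(\gamma-\beta)\sqrt{n}}\to 0$). You then combine Lemma~\ref{lem:inq:abs F} with a $1+O(N_m^{-1/2})$ comparison of the prefactors to obtain both $\abs{V(N_m-1)}>\abs{V(N_m)}$ and $\abs{V(N_m+2)}>\abs{V(N_m+1)}$, exactly as the paper does. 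The only cosmetic difference is that on the right side you use eventual monotonicity of $e^{\beta\sqrt{n}}/\sqrt{n}$ instead of the paper's symmetric $1+O(N_m^{-1/2})$ estimate.
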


\begin{proof}
    For $n>0$, write
    \[
    V(n) = \MM^*(n)+\EE(n),
    \]
    where $(\MM^*(n))_{n > 0}$ and $(\EE(n))_{n > 0}$ are sequences of real numbers satisfying
    \begin{align}
     & \MM^*(n) = (-1)^{n}\alpha\frac{e^{\beta\sqrt{n}}}{\sqrt{n}}F\bigl(\sqrt{n}\bigr)\left(1+O\left(n^{-1/2}\right)\right),\label{est:M*} \\
     & \EE(n) = O\left(\frac{e^{\gamma\sqrt{n}}}{\sqrt{n}}\right).\label{est:E}
    \end{align}
    Since $\left(1+O\left(n^{-1/2}\right)\right) > 0$ for sufficiently large $n$, 
    \[
    \sgn(\MM^*(n)) = \sgn(\MM(n)),
    \]
    with $(\MM(n))_{n>0}$ defined by
    \[
    \MM(n)\coloneqq(-1)^{n}\alpha\frac{e^{\beta\sqrt{n}}}{\sqrt{n}}F\bigl(\sqrt{n}\bigr).
    \]
    Thus, it follows from Lemma~\ref{lem:sign and bounds} that 
    \begin{equation}\label{eq:sgn M*}
    \sgn(\MM^*(N_m)) = \sgn(\MM^*(N_m+1)) = (-1)^{N_m+m}\sgn(\alpha),
    \end{equation}
    for sufficiently large $m\in\M$.
    Moreover, combining Lemma~\ref{lem:sign and bounds} with 
    \[
    \sqrt{N_m} \le \sqrt{N_m+1} \le 2\sqrt{N_m},
    \]
    we obtain that there exist positive constants $A_1,\,A_2$ such that
    \begin{equation}\label{inq:M}
    A_1\frac{e^{\beta\sqrt{N_m+r}}}{N_m+r} \le \abs{\MM
    (N_m+r)} \le A_2\frac{e^{\beta\sqrt{N_m+r}}}{N_m+r},
    \end{equation}
    for $r\in\{0,1\}$. 
    Indeed, one may take $A_1 = c_1\abs{\alpha} \text{ and } A_2 = 2c_2\abs{\alpha}$.
    
    Since $\gamma < \beta$, we have
    \[
    \frac{e^{\gamma\sqrt{n}}}{\sqrt{n}} = o\left(\frac{e^{\beta\sqrt{n}}}{n}\right).
    \]
    Hence, it follows from~\eqref{est:E} and~\eqref{inq:M} that for $r\in\{0,1\}$ and all sufficiently large $m\in\M$,
    \[
    \abs{\EE(N_m+r)} < \frac{1}{2}\abs{\MM(N_m+r)} \le \abs{\MM^*(N_m+r)}.
    \]
    The last inequality is because
    \[
    \left(1+O\left(n^{-1/2}\right)\right) \ge \frac{1}{2},
    \]
    for $n$ large enough. Consequently, for all such $m\in\M$,
    \[
    \sgn(V(N_m)) = \sgn(\MM^*(N_m)),\qquad \sgn(V(N_m+1)) = \sgn(\MM^*(N_m+1)).
    \]
    Together with~\eqref{eq:sgn M*}, this proves that
    \[
    \sgn(V(N_m)) = \sgn(V(N_m+1)).
    \]
    
    We now turn to the local minimum property. It suffices to show that for all sufficiently large $m\in\M$,
    \[
    \abs{V(N_m)} < \abs{V(N_m-1)} 
    \qquad
    \text{and}
    \qquad
    \abs{V(N_m+1)} < \abs{V(N_m+2)}.
    \]
    Recall from~\eqref{Big O:N_N-1} that for $r\in\{0,2\}$,
    \begin{equation*}
    \sqrt{N_m+r} - \sqrt{N_m+r-1} = \frac{1}{\sqrt{N_m+r}+\sqrt{N_m+r-1}}=O\left(N_m^{-1/2}\right).
    \end{equation*}
    Observe that using this, and the Taylor expansions
    \[
    e^x=1+O(\,\abs{x}), \qquad (1+x)^{1/2} = 1+O(\,\abs{x}), \qquad (1+x)^{-1/2} = 1+O(\,\abs{x}),
    \]
    as $x\to 0$, we obtain
    \begin{subequations}
     \begin{align}
    & e^{\beta\left(\sqrt{N_m-1}-\sqrt{N_m}\right)} = e^{O(N_m^{-1/2})} = 1+O(N_m^{-1/2}), \label{Big O:e_0}\\
    & e^{\beta\left(\sqrt{N_m+2}-\sqrt{N_m+1}\right)} = e^{O(N_m^{-1/2})} = 1+O(N_m^{-1/2}), \label{Big O:e_2}\\
    &\frac{\sqrt{N_m}}{\sqrt{N_m-1}} = \left(1-\frac{1}{N_m}\right)^{-1/2} = 1+O(N_m^{-1}), \label{Big O:Sqrt_0}\\
    &\frac{\sqrt{N_m+1}}{\sqrt{N_m+2}} = \left(1-\frac{1}{N_m+2}\right)^{1/2} = 1+O(N_m^{-1}). \label{Big O:Sqrt_2}
    \end{align}   
    \end{subequations}
    From~\eqref{Big O:e_0} and~\eqref{Big O:Sqrt_0},
    \begin{subequations}\label{est:exp_term}
    \begin{align}
    \frac{e^{\beta\sqrt{N_m-1}}}{\sqrt{N_m-1}} = \frac{e^{\beta\sqrt{N_m}}}{\sqrt{N_m}}\left(1+O\left(N_m^{-1/2}\right)\right),
    \end{align}
    \text{and from~\eqref{Big O:e_2} and~\eqref{Big O:Sqrt_2},}
    \begin{align}
    \frac{e^{\beta\sqrt{N_m+2}}}{\sqrt{N_m+2}} = \frac{e^{\beta\sqrt{N_m+1}}}{\sqrt{N_m+1}}\left(1+O\left(N_m^{-1/2}\right)\right).  
    \end{align}
    \end{subequations}
    Hence, by~\eqref{est:M*},~\eqref{est:exp_term}, and Lemma~\ref{lem:inq:abs F}, there exists $\epsilon>0$ such that for all sufficiently large $m\in\M$,
    \begin{subequations}\label{ineq:M*}
    \begin{align}
    \abs{\MM^*(N_m-1)} 
    & = \abs{\alpha}\frac{e^{\beta\sqrt{N_m}}}{\sqrt{N_m}}\abs{F\bigl(\sqrt{N_m-1}\bigr)}\left(1+O\left(N_m^{-1/2}\right)\right)\nonumber \\
    & > \abs{\alpha}\frac{e^{\beta\sqrt{N_m}}}{\sqrt{N_m}}\abs{F\bigl(\sqrt{N_m}\bigr)}(1+\epsilon)\left(1+O\left(N_m^{-1/2}\right)\right)\nonumber \\
    & > \left(1+\frac{\epsilon}{2}\right)\abs{\MM^*(N_m)},
    \end{align}
    \begin{align}
    \qquad\quad\abs{\MM^*(N_m+2)} 
    & = \abs{\alpha}\frac{e^{\beta\sqrt{N_m+1}}}{\sqrt{N_m+1}}\abs{F\bigl(\sqrt{N_m+2}\bigr)}\left(1+O\left(N_m^{-1/2}\right)\right)\nonumber \\
    & > \abs{\alpha}\frac{e^{\beta\sqrt{N_m+1}}}{\sqrt{N_m+1}}\abs{F\bigl(\sqrt{N_m+1}\bigr)}(1+\epsilon)\left(1+O\left(N_m^{-1/2}\right)\right)\nonumber \\
    & > \left(1+\frac{\epsilon}{2}\right)\abs{\MM^*(N_m+1)}.   
    \end{align}   
    \end{subequations}
    From~\eqref{est:M*},~\eqref{est:E}, and Lemma~\ref{lem:sign and bounds}, for $r\in\{-1,0,1,2\}$,
    \[
    \frac{\EE(N_m+r)}{\MM^*(N_m+r)} = O\left(\frac{\sqrt{N_m}}{e^{(\beta-\gamma)\sqrt{N_m+r}}}\right).
    \]
    Since $\beta>\gamma$, we have for $m\in\M$ large enough,
    \begin{equation}\label{ineq:E-M*}
    \abs{\EE(N_m+r)} < \frac{\epsilon}{8}\abs{\MM^*(N_m+r)}.
    \end{equation}
    We may assume $0<\epsilon<4$ so that
    \[
    \left(1-\frac{\epsilon}{8}\right)\left(1+\frac{\epsilon}{2}\right) > \left(1+\frac{\epsilon}{8}\right).
    \]
    Therefore, using~\eqref{ineq:M*} and~\eqref{ineq:E-M*}, for sufficiently large $m\in\M$,
    \begin{align*}
    \abs{V(N_m-1)} 
    & \ge \abs{\MM^*(N_m-1)} - \abs{\EE(N_m-1)} 
      > \left(1-\frac{\epsilon}{8}\right)\left(1+\frac{\epsilon}{2}\right)\abs{\MM^*(N_m)}\\
    & > \left(1+\frac{\epsilon}{8}\right)\abs{\MM^*(N_m)}
      > \abs{\MM^*(N_m)} + \abs{\EE(N_m)}
      \ge \abs{V(N_m)},
    \end{align*}
    \begin{align*}
    \qquad\qquad\quad\abs{V(N_m+2)} 
    & \ge \abs{\MM^*(N_m+2)} - \abs{\EE(N_m+2)}
      > \left(1-\frac{\epsilon}{8}\right)\left(1+\frac{\epsilon}{2}\right)\abs{\MM^*(N_m+1)}\\
    & > \left(1+\frac{\epsilon}{8}\right)\abs{\MM^*(N_m+1)} 
      > \abs{\MM^*(N_m+1)} + \abs{\EE(N_m+1)}
      \ge \abs{V(N_m+1)}.
    \end{align*}
    Hence, $(\,\abs{V(n)})_{n\ge0}$ has a local minimum at $N_m$ or $N_m+1$. This completes the proof of the theorem.
\end{proof}

\begin{samepage} 
    The following corollary completes the proof of Theorem~\ref{thm:main}.
\begin{corollary}
    The sequences $V_2(n),V_3(n)$, and $V_4(n)$ exhibit an \emph{exceptional sign pair phenomenon} in the sense of Theorem~\ref{thm:main}.
\end{corollary}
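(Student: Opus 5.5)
The plan is to deduce the corollary directly from Theorem~\ref{thm:Gen}, applied separately to each $j\in\{2,3,4\}$ with $V=V_j$.

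\textbf{Step 1: $V_j$ satisfies~\eqref{V_n}.} Theorem~B~\cite{kalita2026} gives exactly the form~\eqref{V_n} with
\[
\alpha=\alpha_j\neq0,\qquad \beta=2\Re\bigl(\sqrt{W_j}\bigr),\qquad \gamma=\Re\bigl(\sqrt{W_j}\bigr),\qquad \lambda=2\Im\bigl(\sqrt{W_j}\bigr),\qquad \phi=\pi/4.
\]
Each $V_j(n)$ is an integer, since the $q$-series have integer coefficients. It remains to check that $\beta>\gamma>0$, which amounts to $\Re\sqrt{W_j}>0$ for the principal branch. This holds because $W_j$ is not a nonpositive real number: its imaginary part is $D(e^{\pi i/3})/2>0$. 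Hence $\beta=2\gamma>\gamma>0$. The value of $\lambda$ lies in the admissible set fixed before~\eqref{V_n}. So Theorem~\ref{thm:M and delta}, and with it the sets $\M$ and $\delta$ used in Sections~\ref{sec:osc lemmas} and~\ref{sec:final}, applies for this $\lambda$ with $\phi=\pi/4$. The rational case of that theorem was handled via~\eqref{ineq:BW}.

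\textbf{Step 2: apply Theorem~\ref{thm:Gen}.} Fix $m_0$ such that the conclusions of Theorem~\ref{thm:Gen} hold for all $m\in\M$ with $m\ge m_0$. Enumerate these $m$ increasingly as $m_1<m_2<\cdots$ and set $N'_k:=N_{m_k}$. Both properties then hold at every $N'_k$: the two coefficients share a sign, and one of them is a local minimum of $(\abs{V_j(n)})_{n\ge0}$. This establishes part~(ii) of Theorem~\ref{thm:main} together with the sign statement in part~(i).

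\textbf{Step 3: the growth condition $N_k\ge 10k^2$.} This is the only point needing care. From $t_m=\kappa(m+\tfrac14)^2$ we get $N_m\ge \kappa m^2-1$. In the sparse set $\M$, however, $m_k$ could be much larger than $k$, and that only helps, since $m_k\ge k-1$. The difficulty is that $\kappa$ may be small; for $j=3,4$ one has $\kappa=\pi^2/D(e^{\pi i/3})\approx 9.7<10$. The plan is therefore to pass to a subsequence and set $N_k:=N'_{\sigma(k)}$ for a strictly increasing $\sigma$ chosen greedily so that $N'_{\sigma(k)}\ge 10k^2$. This is possible because $N'_k\to\infty$ by~\eqref{t_m:to-infty}.

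Since both properties hold at every index of the original sequence, the subsequence inherits them. This completes the argument for each $j$.
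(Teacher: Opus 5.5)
Your proposal is correct and follows the same route as the paper's proof. You check that Theorem~B puts each $V_j$ in the form~\eqref{V_n} with $\alpha=\alpha_j$, $\beta=2\Re(\sqrt{W_j})$, $\gamma=\Re(\sqrt{W_j})$, $\lambda=2\Im(\sqrt{W_j})$, $\phi=\pi/4$, apply Theorem~\ref{thm:Gen}, and reindex along a subsequence; you also spell out two points the paper leaves implicit, namely $\Re\sqrt{W_j}>0$ and the greedy subsequence choice giving $N_k\ge 10k^2$ (needed for every $j$, since $\kappa<10$ in all three cases).
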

\end{samepage}

\begin{proof}
    For each $j\in\{2,3,4\}$, the asymptotic expansion of $V_j(n)$ given in Theorem~B~\cite{kalita2026} satisfies the hypotheses of Theorem~\ref{thm:Gen} with
    \[
    \alpha=\alpha_j,\qquad
    \beta=2\,\Re(\sqrt{W_j}),\qquad
    \gamma=\Re(\sqrt{W_j}),\qquad
    \lambda=2\,\Im(\sqrt{W_j}),\qquad
    \phi=\frac{\pi}{4}.
    \]
    Also recall that $N_m\to\infty$ as $m\to\infty$ in Theorem~\ref{thm:Gen}.
    Thus, the result follows immediately from Theorem~\ref{thm:Gen} after passing to a subsequence and reindexing.
\end{proof}

\begin{remark}
    The sequence $(N_m)_{m\in\M}$ grows quadratically.
    Indeed, Lemma~\ref{lem:ineq}(a) together with~\eqref{t_m} implies
    \[
    N_m = t_m + O(1) = \left(\frac{m\pi}{\lambda}+\frac{\pi-2\phi}{2\lambda}\right)^2 + O(1)  = \frac{\pi^2}{\lambda^2}m^2 + O(m).
    \] 
\end{remark}

\begin{remark}
    Theorem~\ref{thm:Gen} remains valid for any $\lambda>0$ such that $\pi^2/\lambda^2$ is irrational and any $\phi\in\R$.
\end{remark}
    
\bibliographystyle{abbrv}
\bibliography{ref.bib}
\end{document}